\newtheorem{thm}{Theorem}
\newtheorem{lem}[thm]{Lemma}
\newtheorem{prop}[thm]{Proposition}
\theoremstyle{definition}
\newtheorem{defn}{Definition}[section]
\newtheorem*{ack}{Acknowledgment}
\newtheorem{rem}[thm]{Remark}
\def\R{\mathbb R}
\def\pt{\partial}
\begin{document}
\title[Manifolds with partially horoconvex boundary]{Homotopy type of manifolds with\\ partially horoconvex boundary}

\author{Changwei Xiong}
\address{Mathematical Sciences Institute, Australian National University, Canberra ACT 2601, Australia}
\email{changwei.xiong@anu.edu.au}
\urladdr{maths.anu.edu.au/people/changwei-xiong}

\date{\today}
\thanks{}
\subjclass[2010]{{53C20}, {53C21}}
\keywords{homotopy type; partial curvature; horoconvex; handlebody}

\maketitle

\begin{abstract}
Let $M$ be an $n$-dimensional compact connected manifold with boundary, $\kappa>0$ a constant and $1\leq q\leq n-1$ an integer. We prove that $M$ supports a Riemannian metric with the interior $q$-curvature $K_q\geq -q\kappa^2$ and the boundary $q$-curvature $\Lambda_q\geq q\kappa$, if and only if $M$ has the homotopy type of a CW complex with a finite number of cells with dimension $\leq (q-1)$. Moreover, any Riemannian manifold $M$ with sectional curvature $K\geq -\kappa^2$ and boundary principal curvature $\Lambda\geq \kappa$ is diffeomorphic to the standard closed $n$-ball.
\end{abstract}

\section{Introduction}

It is a well-known fact that Riemannian metrics on a manifold will put some restriction on its topology. For example, by the work of Cheeger-Gromoll-Meyer \cite{CG72,GM69}, if a compact Riemannian manifold $M$ has positive sectional curvature and convex nonempty boundary $\pt M$, then it must be a disk. A.~Kasue \cite{Kas83} and R.~Ichida \cite{Ich81}, independently, proved that if $M$ has nonnegative Ricci curvature and mean convex boundary $\pt M$, then either $\pt M$ is connected, or $M$ is isometric to a Riemannian product manifold $\Gamma\times [0,a]$. Later Hung-Hsi~Wu \cite{Wu87} and Ji-Ping~Sha \cite{Sha86} independently generalized Cheeger-Gromoll-Meyer's result to Riemannian manifolds with partially positive curvature. More recently, Huisken and Sinestrari \cite{HS09} were able to use mean curvature flow with surgeries to study the topology of a Euclidean domain with two-convex boundary. See e.g. \cite{BH17,Fra02,MN96,MS85,AC14,JLY14,She93} for other relevant works.

In this short note by following Wu's and Sha's approach we first prove:
\begin{thm}\label{thm1}
An $n$-dimensional compact Riemannian manifold $M$ with non-empty boundary $\pt M$ is diffeomorphic to the standard closed $n$-ball if for some real number $\kappa>0$, the sectional curvature $K$ of $M$ satisfies $K\geq -\kappa^2$ and the minimum eigenvalue $\Lambda$ of the second fundamental form of $\pt M$ satisfies $\Lambda\geq \kappa$.
\end{thm}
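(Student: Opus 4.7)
The plan is to study the distance-to-boundary function $\rho(x) = d(x,\pt M)$ and to show that under our hypotheses it is strictly concave in a suitable sense; this will force $M$ to collapse smoothly onto a single interior point and produce the desired diffeomorphism with $D^n$.

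First I would write the Riccati equation for the shape operator $A(t)$ of the parallel hypersurfaces $\Sigma_t = \rho^{-1}(t)$ along a unit-speed geodesic $\gamma$ running normally inward from $\pt M$:
\[
A'(t) + A(t)^2 + R_{\gamma'(t)} = 0,
\]
where $R_{\gamma'}(Y) = R(Y,\gamma')\gamma'$. With the sign convention $A(Y) = \nabla_Y \nabla\rho$, the hypothesis $\Lambda \geq \kappa$ on the boundary gives $A(0) \leq -\kappa\,\mathrm{Id}$, while $K \geq -\kappa^2$ gives $R_{\gamma'} \geq -\kappa^2\,\mathrm{Id}$. A standard eigenvalue-by-eigenvalue comparison then shows $A(t) \leq -\kappa\,\mathrm{Id}$ throughout the smoothness interval: when an eigenvalue $\mu$ of $A$ reaches $-\kappa$, we have $\mu' \leq -\mu^2 + \kappa^2 = 0$, so $\mu$ cannot cross above $-\kappa$. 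Hence focal points along the inward normal geodesics are ruled out, and on the open set $M \setminus \mathrm{Cut}(\pt M)$ the function $\rho$ is smooth and satisfies $\mathrm{Hess}(\rho)|_{(\nabla\rho)^\perp} \leq -\kappa\, g$.

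Next I would argue, in the generalized sense of Grove--Shiohama critical point theory for distance functions, that the only interior critical point of $\rho$ is its global maximum. The strict concavity on smooth directions, combined with the first variation formula, implies that at any interior point $y$ where $\rho$ attains a non-maximal value there is a direction $v \in T_y M$ making angle less than $\pi/2$ with every initial tangent (at $y$) of a minimizing segment from $\pt M$; hence $y$ is a regular point. A gradient-like vector field built from these directions produces a flow that deformation retracts $M \setminus \{p\}$ onto a collar of $\pt M$, where $p$ is the unique maximum of $\rho$. Splicing this flow with the smooth normal exponential map from $\pt M$, and with a smooth chart near $p$ (on which $\rho$ is strictly concave, so a small sublevel set of $-\rho$ is diffeomorphic to a disk), assembles a diffeomorphism $M \cong D^n$.

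The main obstacle is that $\rho$ need not be smooth across $\mathrm{Cut}(\pt M)$: while focal points are absent by the Riccati comparison, ordinary cut points arising from two distinct minimizing segments from $\pt M$ are not excluded by the local comparison alone. Establishing the strict concavity of $\rho$ in the required generalized sense across the cut locus, and then promoting the resulting topological retraction to a genuinely \emph{smooth} diffeomorphism with the standard disk, is where the careful use of Wu's and Sha's techniques will be essential.
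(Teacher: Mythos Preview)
Your Riccati comparison correctly yields $A(t)\le -\kappa\,\mathrm{Id}$ on the smoothness interval, but the conclusion you draw from it is backwards: a uniform \emph{upper} bound of $-\kappa$ on the eigenvalues of $A$ does nothing to prevent blow-up to $-\infty$, and focal points of $\partial M$ correspond precisely to such blow-up. The model example already shows this: for a geodesic ball of radius $R$ in the space form of curvature $-\kappa^2$, the shape operator along an inward normal geodesic has eigenvalue $-\kappa\coth\bigl(\kappa(R-t)\bigr)\to -\infty$ as $t\to R$, and the center is a focal point of multiplicity $n-1$. So your dichotomy ``focal points absent, only ordinary cut points remain'' is false; the cut locus may well consist of focal points. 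This does not invalidate the Hessian bound $\mathrm{Hess}(\rho)|_{(\nabla\rho)^\perp}\le -\kappa\,g$ on $M\setminus\mathrm{Cut}(\partial M)$, which holds regardless, but it means your picture of the cut locus is wrong, and with it the informal justification for why the later splicing should go through.

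The paper takes a different and cleaner route that sidesteps these issues. Instead of working with $\rho$ itself (which is only concave, with vanishing second derivative in the radial direction), the paper shows that $-\log\rho$ is \emph{strictly} convex on $M\setminus\partial M$ in the support-function sense. The logarithm is exactly what supplies the missing positive term $\beta^2/b^2$ in the radial direction. The perpendicular contribution is controlled not via Riccati but via the second variation of arc length with the test field $W(t)=\alpha\cosh(\kappa t)\,X^\perp(t)+(1-t/b)\beta\,\gamma'(t)$; this produces a smooth comparison function $f\ge\rho$ with $f(x)=\rho(x)$, so that the estimate $C(-\log\rho)\ge\varepsilon(M)>0$ holds uniformly, \emph{including across the cut locus}, with no separate analysis of focal versus ordinary cut points. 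Wu's Smoothing Theorem then yields a $C^\infty$ strictly convex function on $M\setminus\partial M$, and the diffeomorphism with $D^n$ follows from standard convex-function theory rather than from a hand-built gradient-like flow. Your outline could in principle be completed, but the $-\log\rho$ device is what turns a soft perpendicular concavity into the strict convexity in all directions that makes the conclusion immediate.
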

\begin{rem}
For the case $\kappa=0$, the same conclusion holds under stronger conditions, e.g. $K\geq 0$, $\Lambda\geq 0$ and $K>0$ in a neighborhood of $\pt M$ \cite{Wu87}. Moreover, it is also proved in \cite{Wu87} that the conclusion holds if $\Lambda>0$ and $\min_{\pt M}\Lambda+\rho_0 \min_MK>0$, where $\rho_0$ is the maximum distance from the boundary. However, this latter condition does not cover the geodesic balls in hyperbolic space with large radius. In this sense, our result is optimal. The same remark applies to the following Theorem \ref{thm2}.
\end{rem}
\begin{rem}
Compared to the standard hyperbolic space, the manifold in Theorem \ref{thm1} may be said to have horoconvex boundary.
\end{rem}

To state our second result, recall the following definitions from \cite{Wu87,Sha86}. Let $M$ be an $n$-dimensional Riemannian manifold with boundary.
\begin{defn}
Let $1\leq q\leq n-1$ be an integer. For any $x\in M$ and any $(q+1)$-orthonormal vectors $\{e_0,e_1,\dots,e_q\}$ in $T_xM$, we call $\sum_{i=1}^qK(e_0\wedge e_i)$ an interior $q$-curvature at $x\in M$. Denote by $K_q(x)$ the minimum of all $q$-curvatures at $x\in M$.
\end{defn}
\begin{defn}
Let $1\leq q\leq n-1$ be an integer. For any $x\in \pt M$, let $\lambda_1(x)\leq \dots \leq \lambda_{n-1}(x)$ be the eigenvalues of the Weingarten operator on $\pt M$. We call any sum $\lambda_{i_1}(x)+\dots+\lambda_{i_q}(x)$ with $1\leq i_1<\dots<i_q\leq n-1$ a boundary $q$-curvature at $x\in \pt M$. Denote by $\Lambda_q(x)$ the minimum of all $q$-curvatures at $x\in \pt M$.
\end{defn}

Then our second result is as follows.
\begin{thm}\label{thm2}
Let $M$ be an $n$-dimensional compact Riemannian manifold with nonempty boundary $\pt M$ and let $1\leq q\leq n-1$ be an integer. Suppose for some $\kappa>0$, we have $K_q\geq -q\kappa^2$ and $\Lambda_q\geq q\kappa$. Then
\begin{enumerate}
  \item[(i)] $M$ has the homotopy type of a CW complex with a finite number of cells with dimension $\leq (q-1)$.
  \item[(ii)] $M$ also has the homotopy type of a CW complex obtained from $\pt M$ by attaching a finite number of cells with dimension $\geq (n-q+1)$.
\end{enumerate}
\end{thm}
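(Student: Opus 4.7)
I plan to follow the Sha-Wu strategy and apply Morse theory to the distance function $\rho(x) := d(x,\pt M)$, using the $q$-curvature hypotheses to force $\rho$ to be ``$q$-concave'' in the sense that its Hessian has at most $q-1$ positive eigenvalues at every smooth point. The analytic core is a Riccati comparison traced over a $q$-dimensional subspace. On the open set where $\rho$ is smooth, fix $p$, let $\gamma:[0,\rho(p)]\to M$ be the unique unit-speed minimizer from $\pt M$ to $p$, and let $A_t := -\nabla^2\rho|_\perp$ denote the Weingarten operator of the equidistant hypersurface $\{\rho=t\}$ with respect to the outward normal $-\gamma'(t)$, so that $A_0$ is the Weingarten operator of $\pt M$ and $A_t$ satisfies $A_t' = A_t^2 + R_{\gamma'(t)}$, where $R_v(X) := R(X,v)v$. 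For any $q$-dimensional subspace $V \subset T_{\gamma(t)}\{\rho=t\}$ spanned by parallel orthonormal vectors $e_1,\dots,e_q$ along $\gamma$, set $\sigma_V(t) := \sum_i \langle A_t e_i, e_i\rangle$. Cauchy-Schwarz gives $\mathrm{tr}_V(A_t^2) \geq \sigma_V(t)^2/q$, and $K_q \geq -q\kappa^2$ gives $\mathrm{tr}_V R_{\gamma'} \geq -q\kappa^2$, so
\[\sigma'_V(t) \geq \frac{\sigma_V(t)^2}{q} - q\kappa^2.\]
The hypothesis $\Lambda_q \geq q\kappa$ forces $\sigma_V(0) \geq q\kappa$, and since the right-hand side above is non-negative whenever $\sigma_V \geq q\kappa$, an ODE comparison yields $\sigma_V(t) \geq q\kappa$ along the whole smooth segment. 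Taking the infimum over $V$, the $q$-th smallest eigenvalue of $A_t$ is $\geq \kappa$, so $A_t$ has at most $q-1$ non-positive eigenvalues. Combined with the zero eigenvalue of $\nabla^2\rho$ in the $\nabla\rho$ direction (from $|\nabla\rho|^2\equiv 1$), this shows $\nabla^2\rho$ has at most $q-1$ positive eigenvalues on the smooth locus.

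Next I would construct a smooth approximation $\tilde\rho \in C^\infty(M)$, arbitrarily $C^0$-close to $\rho$, which is Morse on $M\setminus\pt M$, agrees with $\rho$ in a collar of $\pt M$ (so $\pt M$ is a regular level set with $\nabla\tilde\rho$ pointing inward), and has at every critical point a Hessian with at most $q-1$ positive eigenvalues. The idea, following Greene-Wu and Sha, is that $\rho$ is $q$-concave in the viscosity sense globally: at a Grove-Shiohama critical point $p$, each minimizing geodesic $\gamma_\alpha$ from $\pt M$ to $p$ extends to a local smooth function $\rho_\alpha\geq \rho$ (the length of the prolonged geodesic) inheriting the eigenvalue bound of Step 1, and $\rho = \min_\alpha \rho_\alpha$ near $p$. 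A partition-of-unity mollification followed by a generic perturbation then produces a Morse $\tilde\rho$ preserving the upper bound $q-1$ on the number of positive Hessian eigenvalues at every critical point. This smoothing step, in particular the verification that the non-negative-eigenvalue count survives the approximation, is the main obstacle and will require the most care.

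With $\tilde\rho$ in hand, the homotopy-type conclusions follow from classical Morse theory. For (ii), sweep the sublevel sets of $\tilde\rho$ upward from $\tilde\rho = 0$: starting from $\pt M$, each interior critical point contributes a handle whose dimension equals its Morse index, namely the number of negative Hessian eigenvalues, which is at least $n-(q-1) = n-q+1$ by the eigenvalue bound above. For (i), apply the same Morse theory to $-\tilde\rho$, whose gradient points outward on $\pt M$: the flow of $\nabla\tilde\rho$ deformation-retracts $M$ onto a CW complex built from the (finitely many) local maxima of $\tilde\rho$ as $0$-cells by attaching cells whose dimensions equal the Morse indices of $-\tilde\rho$, each being the number of positive Hessian eigenvalues of $\tilde\rho$ and hence $\leq q-1$.
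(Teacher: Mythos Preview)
Your Riccati/Cauchy--Schwarz derivation of the Hessian bound on the smooth locus is correct and is a clean alternative to the paper's computation. The paper obtains the same inequality by choosing the test variation field $W(t)=\alpha\cosh(\kappa t)X^{\perp}(t)+(1-t/b)\beta\gamma'(t)$ and applying the second variation formula; your traced Riccati inequality
\[
\sigma_V'(t)\;\geq\;\tfrac{1}{q}\,\sigma_V(t)^2-q\kappa^2,\qquad \sigma_V(0)\geq q\kappa,
\]
with the ODE comparison $\sigma_V(t)\geq q\kappa$, is the infinitesimal version of that same calculation and gives exactly the conclusion the paper needs in its ``Case~1'': for every $q$-frame orthogonal to $\nabla\rho$ one has $\sum_k D^2(-\rho)(X_k,X_k)\geq q\kappa$.

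The gap is in the smoothing step. Your plan is to mollify $\rho$ directly and claim the bound ``at most $q-1$ positive Hessian eigenvalues'' survives at critical points. But $-\rho$ is \emph{not} in the class $\mathcal C(q)$ for which the Greene--Wu smoothing theorem is stated: along the gradient direction $D^2\rho(\nabla\rho,\nabla\rho)=0$, so a $q$-frame of the form $\{\nabla\rho,X_2,\dots,X_q\}$ with $X_2,\dots,X_q$ lying in the (possibly $(q-1)$-dimensional) non-positive eigenspace of $A_t$ gives $\sum_k D^2(-\rho)(X_k,X_k)\leq 0$. Your barrier functions $\rho_\alpha$ each carry this same degenerate direction $\nabla\rho_\alpha$, and under convex combination or mollification these null directions (which point in different directions at a Grove--Shiohama critical point) can interact; there is no off-the-shelf theorem guaranteeing that ``$\leq q-1$ positive eigenvalues in the upper-barrier sense'' persists through a generic smoothing.

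The paper handles precisely this issue, and it is the substantive part of the argument you are deferring. One composes with a smooth increasing convex $\chi:(-\infty,0)\to\R$ and works with $\chi(-\rho)$. Then
\[
D^2\bigl(\chi(-f_x)\bigr)(X,X)=\chi''(-f_x)\,\langle \nabla f_x,X\rangle^2-\chi'(-f_x)\,D^2f_x(X,X),
\]
so for frames orthogonal to $\nabla\rho$ the estimate comes from your Case~1 bound, while for frames with a component along $\nabla\rho$ the term $\chi''\langle\nabla f_x,X\rangle^2$ supplies the missing strict positivity, provided $\chi''/\chi'$ is chosen large enough on each compact shell $\rho^{-1}([\rho_{j+2},\rho_j])$. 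This is Cases~2--3 of the paper's proof and Lemma~\ref{lem1}; with $\chi$ in hand one has $\chi(-\rho)\in\mathcal C(q)$ and the Greene--Wu Smoothing Theorem applies directly. Your Riccati step plugs in perfectly as the replacement for Case~1, but you should keep the $\chi$-composition rather than attempt an ad hoc mollification of $\rho$ itself.
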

\begin{rem}
For the special case $q=2$, under a slightly stronger condition that $K_2\geq -2\kappa^2$ and $\Lambda_2>2\kappa$, Brendle and Huisken \cite{BH17} proved that the manifold is indeed diffeomorphic to a $1$-handlebody. Compared to the sophisticated approach in \cite{BH17}, ours is relatively simple. Besides, from \cite{HS09} to \cite{BH17} we believe the manifolds with certain partial curvature are receiving more and more attention.
\end{rem}

The proofs for Theorems \ref{thm1} and \ref{thm2} follow those in \cite{Wu87,Sha86}. We first use the distance function from the boundary to construct a suitable convex or $q$-convex function, then smooth it and apply the standard Morse theory to conclude the desired results. The methods by Wu \cite{Wu87} and  by Sha \cite{Sha86} are very similar. Here we mainly follow Wu's.

On the other hand, following the argument in \cite{LM84}, Sha in \cite{Sha87} was able to prove an interesting converse result of those in \cite{Wu87,Sha86}. So it is a natural question whether the similar converse holds in our present setting. It turns out that the answer is affirmative. More precisely, we obtain:
\begin{thm}\label{thm3}
Let $M$ be an $n$-dimensional compact connected manifold with nonempty boundary and $1\leq q\leq n-1$ an integer. Assume that $M$ is a handlebody with handles of dimension $\leq (q-1)$. Then for any fixed $\varepsilon>0$ and $c>0$, $M$ admits a Riemannian metric with $\varepsilon$-pinched sectional curvature $K$, i.e. $1\geq K\geq 1-\varepsilon$, and with the boundary $q$-curvature $\Lambda_q> c$. In particular, for any $\kappa>0$, $M$ admits a Riemannian metric with $K_q\geq -q\kappa^2$ and $\Lambda_q\geq q\kappa$.
\end{thm}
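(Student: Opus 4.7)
The plan is to construct the required metric on $M$ inductively on a handle decomposition, modelling each new handle on a piece of the round unit sphere $S^n$ of constant curvature one. Fix a decomposition $D^n = M_0 \subset M_1 \subset \cdots \subset M_N = M$, where $M_{j+1}$ is obtained from $M_j$ by attaching a $k_j$-handle with $k_j \leq q-1$. I will build a metric $g_j$ on $M_j$ satisfying $1-\varepsilon_j\leq K\leq 1$ and $\Lambda_q>c_j$ on $\partial M_j$, with monotone sequences $\varepsilon_0<\cdots<\varepsilon_N=\varepsilon$ and $c_0>\cdots>c_N=c$ chosen so that all inductive losses are affordable. For the base case $M_0=D^n$, I take $g_0$ to be the induced metric on a small geodesic ball $B(p,r_0)\subset S^n$; this gives $K\equiv 1$ and every principal curvature of $\partial B(p,r_0)$ equal to $\cot r_0$, hence $\Lambda_q=q\cot r_0>c_0$ for $r_0$ small.

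For the inductive step, the model for the $k$-handle is the $\delta$-tube around a totally geodesic $k$-disk $D^k\subset S^n$. Fermi coordinates $(\rho,\sigma,\xi)\in[0,\pi/2]\times S^k\times S^{n-k-1}$ about the totally geodesic $k$-sphere containing $D^k$ put the round metric in the form
\begin{equation*}
g_{S^n}=d\rho^2+\cos^2\rho\,g_{S^k}+\sin^2\rho\,g_{S^{n-k-1}},
\end{equation*}
and restricting $\sigma$ to the hemisphere $D^k$ gives a region diffeomorphic to $D^k\times D^{n-k}$ that inherits $K\equiv 1$. Its boundary decomposes into a lateral face $\{\rho=\delta\}\cong D^k\times S^{n-k-1}$ (the future free boundary) and a side wall $\{\sigma\in\partial D^k\}\cong S^{k-1}\times D^{n-k}$ (the future attaching region). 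With respect to the outward normal $\partial_\rho$, the lateral face has principal curvatures $-\tan\delta$ (multiplicity $k$, along $S^k$-directions) and $\cot\delta$ (multiplicity $n-k-1$, along $S^{n-k-1}$-directions). Because $k\leq q-1<q$, the $q$ smallest eigenvalues are all $k$ negative ones together with $q-k\geq 1$ positive ones, so
\begin{equation*}
\Lambda_q=-k\tan\delta+(q-k)\cot\delta,
\end{equation*}
which is arbitrarily large as $\delta\to 0$.

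The attachment is then carried out by gluing the model onto $M_j$ along a tubular collar of the given attaching map, smoothing the metric through a thin annular neck, and rounding the corner between the lateral face and the side wall. I would follow Sha's bending scheme \cite{Sha87}, which adapts Lawson-Michelsohn \cite{LM84}: the interpolation between $g_j$ and the model metric uses cutoff functions supported in a neck of arbitrarily small width, and the corner is smoothed by a rotationally symmetric profile chosen to remain convex. The hard part of the proof will be quantitative, namely verifying after the smoothing that the sectional curvature stays inside $[1-\varepsilon_{j+1},1]$ and that the new boundary $q$-curvature stays above $c_{j+1}$ everywhere. The first is controlled by the thinness of the neck, which keeps the interpolated metric arbitrarily close in $C^2$ to the two original metrics; the second exploits the large surplus $(q-k)\cot\delta$ available on the lateral face to absorb the bounded curvature loss incurred in rounding the corner. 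Iterating $N$ times produces the desired metric on $M_N=M$.
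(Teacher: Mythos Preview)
Your approach differs substantially from the paper's, and the metric-gluing step contains a genuine gap.

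The paper does not build the metric handle by handle. Instead it invokes Gromov's h-principle result \cite{Gro69} to obtain, in one stroke, an $\varepsilon$-pinched metric on all of $M$. With this ambient metric \emph{fixed}, the problem reduces to finding a domain $N\subset M$, diffeomorphic to $M$, whose boundary satisfies $\Lambda_q(\partial N)>c$. This is done by starting from a small geodesic ball and attaching handles via Proposition~\ref{prop1}, which deforms only the \emph{hypersurface} (never the metric), realising the new boundary as a level set $\{s-f(r)=0\}$ built from the distance functions $s=\mathrm{dist}(\cdot,X)$ and $r=\mathrm{dist}(\cdot,D^k)$. This is exactly the setting of Sha's and Lawson--Michelsohn's bending arguments: they bend a hypersurface inside a fixed ambient Riemannian manifold; they do not glue Riemannian metrics.

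Your proposal, by contrast, tries to glue pieces of $S^n$ while maintaining $1-\varepsilon\leq K\leq 1$, and cites \cite{Sha87,LM84} for the technique. But those references do not address metric gluing, so the citation does not carry the weight you need. More seriously, the assertion that ``the thinness of the neck keeps the interpolated metric arbitrarily close in $C^2$'' is backwards: when the two metrics disagree on the overlap, a cutoff supported on a neck of width $w$ has second derivatives of order $w^{-2}$, so shrinking the neck makes curvature blow up rather than stay near $1$. After the very first attachment $M_1$ is in general no longer locally isometric to $S^n$ (it need not even be simply connected), so all subsequent gluings genuinely involve interpolating between non-matching metrics. You also have not explained why the metric induced on the image of the prescribed attaching embedding $S^{k-1}\times D^{n-k}\hookrightarrow\partial M_j$ should agree with the side-wall metric of your spherical model; the attaching map is dictated by the topology of $M$ and cannot be chosen freely. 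Your computation of $\Lambda_q$ on the tube $\{\rho=\delta\}$ is correct and is morally the same mechanism that makes Proposition~\ref{prop1} work, but separating the curvature-pinching problem (handled by Gromov) from the boundary-bending problem (handled extrinsically) is what makes the paper's argument go through.
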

\begin{rem}
When $c=0$, the result reduces to those in \cite{LM84,Sha87}.
\end{rem}
The proof of Theorem \ref{thm3} is by combination of Gromov's result \cite{Gro69} and a result regarding handle-attaching process which follows closely those in \cite{LM84,Sha87}. We find that the arguments in \cite{LM84,Sha87} can be nicely adapted to the context here.

The organization of the rest is as follows. In Section \ref{sec2} we review some basic definitions and facts on $q$-convex functions. Then we prove Theorems \ref{thm1} and \ref{thm2} in Sections \ref{sec3} and \ref{sec4}, respectively. Lastly, in Section \ref{sec5} we give the proof of Theorem \ref{thm3}. In Sections \ref{sec2}--\ref{sec4} a Riemannian metric is given, while in Section \ref{sec5} the Riemannian metric is to be constructed.

\begin{ack}
The author is grateful to Ben~Andrews for valuable discussions and to Haizhong~Li for consistent help. This work was supported by the ARC Laureate Fellowship FL150100126.
\end{ack}

\section{Preliminaries}\label{sec2}

In this section we review some basic setting in \cite{Wu87}. Let $M$ be an $n$-dimensional Riemannian manifold and $f:M\rightarrow \R$ a continuous function. Take $x\in M$ and $X\in T_xM$. Define the following extended real numbers:
\begin{align}
Cf(x;X)&=\liminf_{r\rightarrow 0}\frac{f(\exp_x(rX))+f(\exp_x(-rX))-2f(x)}{r^2},\\
&\quad Cf(x)=\inf_{X\in T_xM,\:|X|=1}Cf(x;X).
\end{align}
Note that if $f$ is $C^2$, then $Cf(x;X)$ is just the Hessian $D^2f(X,X)$. We call $f$ is strictly convex if there exists a continuous positive function $h$ on $M$ such that $Cf\geq h$.

Next, given an inner product space $V$ and $\varepsilon>0$, a set of $q$ vectors $\{X_1,\dots,X_q\}\subset V$ is called $\varepsilon$-orthonormal if $|\langle X_i,X_j\rangle-\delta_{ij}|<\varepsilon$ for $i,j=1,\dots,q$. A set of vector fields on a Riemannian manifold is $\varepsilon$-orthonormal if it is so at each point. Now we define the important class $\mathcal{C}(q)$.
\begin{defn}
Given an integer $1\leq q\leq n-1$, the class $\mathcal{C}(q)$ consists of all continuous functions $f:M\rightarrow \R$ which are Lipschitz continuous on each compact subset of $M$ and which have the following property: For each $x_0\in M$, there exists a neighborhood $W$ of $x_0$ and positive constants $\varepsilon$ and $\eta$ such that we have
\begin{equation}
\sum_{i=1}^q Cf(x;X_i)\geq \eta
\end{equation}
for all $x\in W$ and $\{X_1,\dots,X_q\}$ an $\varepsilon$-orthonormal set in $T_xM$.
\end{defn}
From the definition it is easy to see that $\mathcal{C}(1)\subset \mathcal{C}(2)\subset \dots \subset \mathcal{C}(n)$, and that $\mathcal{C}(1)$ is the set of all strictly convex functions. It is also known that $\mathcal{C}(n)$ is the set of locally Lipschitzian strictly subharmonic functions on $M$. One of the important properties of $\mathcal{C}(q)$ is the following smoothing theorem.
\begin{thm}[Smoothing Theorem \cite{GW79,Wu87}]\label{thm4}
On a Riemannian manifold $M$, given an $f\in \mathcal{C}(q)$ ($1\leq q\leq dim\,M$) and a positive continuous function $\xi$, there exists a $C^\infty$ function $F\in \mathcal{C}(q)$ such that $|F-f|<\xi$.
\end{thm}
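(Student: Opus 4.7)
The plan is to follow the Greene--Wu smoothing scheme: first carry out a local smoothing by convolution in normal coordinates, then glue the local smoothings iteratively with rapidly decreasing smoothing scales so that the $\mathcal{C}(q)$ property is preserved uniformly.

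\textbf{Local step.} Fix $p\in M$ and choose normal coordinates $(y^1,\dots,y^n)$ on a geodesic ball $B_r(p)$ on which $\exp_p$ is a diffeomorphism and $g_{ij}(y)=\delta_{ij}+O(|y|^2)$. For small $\delta>0$, set
\[
f_\delta(y)=\delta^{-n}\int_{\R^n} f(y-z)\,\phi(z/\delta)\,dz,
\]
where $\phi$ is a nonnegative smooth mollifier with unit integral and support in the unit ball. Then $f_\delta$ is $C^\infty$, and since $f$ is locally Lipschitz, $f_\delta\to f$ uniformly on compact subsets as $\delta\to 0$. In normal coordinates the Riemannian Hessian of a $C^2$ function differs from its Euclidean Hessian only by Christoffel-symbol terms of size $O(|y|)$, so the Riemannian Hessian of $f_\delta$ in a unit direction $X$ equals the Euclidean convolution of $f$'s distributional Hessian in direction $X$, up to an $O(|y|+\delta)$ error. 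Since $f\in\mathcal{C}(q)$, there exist $\varepsilon,\eta>0$ such that $\sum_{i=1}^q Cf(x;X_i)\geq \eta$ for $x$ near $p$ and $\varepsilon$-orthonormal $\{X_i\}$; integrating this pointwise inequality against the nonnegative mollifier yields
\[
\sum_{i=1}^q D^2 f_\delta(x)(X_i,X_i)\geq \eta-o_\delta(1)
\]
on a slightly shrunken ball and $\tfrac12\varepsilon$-orthonormal frames, so $f_\delta\in \mathcal{C}(q)$ near $p$ with constant $\eta/2$.

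\textbf{Global gluing.} Choose a locally finite countable open cover $\{U_k\}$ of $M$ by normal coordinate balls together with shrinkings $V_k\Subset U_k$ still covering $M$ and smooth cutoffs $\chi_k\in C^\infty_c(U_k)$ with $\chi_k\equiv 1$ on $V_k$. Set $F_0=f$ and define inductively
\[
F_k=\chi_k\,(F_{k-1})_{\delta_k}+(1-\chi_k)\,F_{k-1},
\]
where $(F_{k-1})_{\delta_k}$ denotes mollification at scale $\delta_k$ in the normal chart on $U_k$. Pick $\delta_k\searrow 0$ so rapidly that $|F_k-F_{k-1}|<2^{-k}\xi$ pointwise and $\|(F_{k-1})_{\delta_k}-F_{k-1}\|_{C^0(U_k)}$ is much smaller than $\|\chi_k\|_{C^2}^{-1}2^{-k}\eta$. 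The first condition ensures $F:=\lim_k F_k$ exists with $|F-f|<\xi$; by local finiteness of $\{U_k\}$ only finitely many updates are nontrivial on a given compact set, so $F\in C^\infty(M)$.

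\textbf{Preservation of $\mathcal{C}(q)$.} The delicate step, and the main obstacle, is to keep the $\mathcal{C}(q)$ lower bound through the induction. Away from $\mathrm{supp}\,d\chi_k$ the update is either the identity or a pure convolution, handled by the local step. In the transition region where $d\chi_k\neq 0$, the Hessian of $F_k$ picks up cross terms of the form $d\chi_k\otimes d\bigl((F_{k-1})_{\delta_k}-F_{k-1}\bigr)$ and a $D^2\chi_k\cdot\bigl((F_{k-1})_{\delta_k}-F_{k-1}\bigr)$ term; both are $C^0$-controlled by our choice of $\delta_k$, so the degradation of $\eta$ at stage $k$ is summable and the limiting constant remains positive. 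Taking the limit gives $F\in\mathcal{C}(q)$. The hardest bookkeeping lies in quantifying these degradations uniformly while propagating the $\varepsilon$-orthonormal tolerance across the interpolation regions, which is precisely where the careful schedule of scales in \cite{GW79,Wu87} is essential.
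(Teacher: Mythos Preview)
The paper does not prove this theorem; it is quoted from \cite{GW79,Wu87} and used as a black box, so there is nothing in the paper to compare your argument against. Assessing your sketch on its own, the broad architecture (local mollification in normal charts, then inductive patching over a locally finite cover) is in the spirit of Greene--Wu, but two of the steps have genuine gaps.

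In the local step you write that $D^2 f_\delta(X,X)$ equals ``the Euclidean convolution of $f$'s distributional Hessian in direction $X$'' up to small errors, and then bound it below by integrating the pointwise inequality $\sum_i Cf(x;X_i)\ge\eta$ against the mollifier. But $Cf(x;X)$ is by definition a $\liminf$ of geodesic second differences; for a merely Lipschitz $f$ there is no identification of $Cf$ with any distributional second derivative, and $\liminf$ does not commute with integration against $\phi$. The Greene--Wu argument works instead at the level of second difference quotients: one shows that the second difference of the mollified function along a geodesic is an average of second differences of $f$ along nearby curves, controls the curve-versus-geodesic discrepancy by curvature, and only then passes to the $\liminf$. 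Your shortcut bypasses exactly the part that requires work.

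In the gluing step you assert that the cross term $d\chi_k\otimes d\bigl((F_{k-1})_{\delta_k}-F_{k-1}\bigr)$ is ``$C^0$-controlled by our choice of $\delta_k$''. This is false in general: for a Lipschitz, non-$C^1$ function the gradients of its mollifications are uniformly bounded by the Lipschitz constant but do \emph{not} converge uniformly as $\delta_k\to 0$, so that cross term remains of size roughly $\|d\chi_k\|_{C^0}\cdot\mathrm{Lip}(F_{k-1})$ no matter how small $\delta_k$ is. Consequently your inductive update, as written, need not preserve the $\mathcal{C}(q)$ lower bound across the transition annuli $\mathrm{supp}\,d\chi_k$. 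The constructions in \cite{GW79,Wu87} are arranged precisely so that no such first-order cross term appears; a naive partition-of-unity interpolation of the function values does not suffice.
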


The next lemma shows how to verify that a locally Lipschitzian function belongs to $\mathcal{C}(q)$. For the statement, given a function $f$ on $M$, a function $h$ is said to support $f$ at $x\in M$ if $h$ is defined in a neighborhood $W$ of $x$ such that $h\leq f$ in $W$ and $h(x)=f(x)$.
\begin{lem}[\cite{Wu87}]\label{lem1}
Given a locally Lipschitzian function $f$ on a Riemannian manifold, $f$ belongs to $\mathcal{C}(q)$ if for any $x_0\in M$, there exists a neighborhood $W$ of $x_0$ such that for each $x\in W$ there exists a $C^\infty$ function $h_x$ supporting $f$ at $x$ which has the following two properties:
\begin{enumerate}
  \item[(i)] For some $\eta>0$, $\sum_{i=1}^q D^2h_x(X_i,X_i)\geq \eta$ holds for every $x\in W$ and every orthonormal set $\{X_1,\dots,X_q\}$ in $T_xM$.
  \item[(ii)] For some constants $A_1,A_2>0$, the inequalities $A_1\leq D^2h_x(X,X)\leq A_2$ hold for every $x\in W$ and every unit tangent vector $X\in T_xM$.
\end{enumerate}
\end{lem}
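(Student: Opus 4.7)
The plan is to use the pointwise support inequality to reduce $Cf$ to the Hessian of $h_x$, and then absorb the $\varepsilon$-orthonormal perturbation via the uniform bounds on $D^2 h_x$ from property (ii).

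First I would fix $x_0 \in M$ and take the neighborhood $W$ provided by the hypothesis. For every $x \in W$ and $X \in T_xM$, since $h_x \leq f$ near $x$ with equality at $x$, the symmetric difference quotient defining $Cf(x;X)$ is pointwise bounded below by the corresponding one for $h_x$. Because $h_x$ is $C^\infty$, the latter converges (as an honest limit) to $D^2 h_x(x)(X,X)$ via a second-order Taylor expansion of $h_x \circ \exp_x$ at the origin. Hence
\[ Cf(x;X) \;\geq\; D^2 h_x(x)(X,X) \qquad \text{for all } x \in W,\ X \in T_xM. \]
It therefore suffices to produce $\varepsilon', \eta' > 0$, uniform in $x \in W$, such that $\sum_{i=1}^q D^2 h_x(X_i,X_i) \geq \eta'$ whenever $\{X_1,\dots,X_q\}$ is $\varepsilon'$-orthonormal in $T_xM$.

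Next I would pass from the $\varepsilon'$-orthonormal hypothesis to an honest orthonormal one via Gram--Schmidt. For $\varepsilon'$ small, such a set $\{X_i\}$ yields an orthonormal $\{Y_i\}$ spanning the same subspace with $|X_i - Y_i| \leq C(q)\varepsilon'$; this is a routine induction using only that the Gram matrix of $\{X_i\}$ is $I + O(\varepsilon')$. Property (ii) forces $D^2 h_x$ to have eigenvalues in $[A_1,A_2]$, so $|D^2 h_x(U,V)| \leq A_2 |U||V|$ as a symmetric bilinear form. Expanding
\[ D^2 h_x(X_i,X_i) - D^2 h_x(Y_i,Y_i) = 2 D^2 h_x(Y_i, X_i - Y_i) + D^2 h_x(X_i - Y_i, X_i - Y_i) \]
and summing over $i$ gives $\big|\sum_i D^2 h_x(X_i,X_i) - \sum_i D^2 h_x(Y_i,Y_i)\big| \leq C'(q) A_2 \varepsilon'$. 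Property (i) applied to $\{Y_i\}$ yields $\sum_i D^2 h_x(Y_i,Y_i) \geq \eta$; choosing $\varepsilon'$ with $C'(q) A_2 \varepsilon' \leq \eta/2$ then delivers $\sum_i D^2 h_x(X_i,X_i) \geq \eta/2$, and combining with the first step, $\sum_i Cf(x;X_i) \geq \eta/2$, which is exactly the condition defining $\mathcal{C}(q)$ at $x_0$.

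The only delicate point — more a bookkeeping matter than a genuine obstacle — is that the threshold $\varepsilon'$ and the lower bound $\eta/2$ must be chosen uniformly over $W$. This is immediate because the hypotheses stipulate that $\eta, A_1, A_2$ depend only on $W$, and the admissible $\varepsilon'$ depends only on $q, \eta, A_2$; the supporting function $h_x$ itself is allowed to vary freely with $x$, and all the estimates above involve $h_x$ only through quantities that are controlled at $x$, never globally.
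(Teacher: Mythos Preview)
Your argument is correct and complete. Note, however, that the paper does not supply its own proof of this lemma: it is quoted directly from \cite{Wu87} and used as a black box, so there is nothing in the paper to compare your approach against. Your two-step strategy---first using the support inequality $h_x\le f$, $h_x(x)=f(x)$ to obtain $Cf(x;X)\ge D^2h_x(X,X)$, then using Gram--Schmidt together with the uniform Hessian bound from~(ii) to pass from $\varepsilon'$-orthonormal frames to orthonormal ones---is the standard way this type of lemma is proved, and is essentially what one finds in Wu's original paper. One small remark: you only ever invoke the upper bound $A_2$ from~(ii) to control $|D^2h_x(U,V)|$, and indeed that is all that is needed for the perturbation step; the lower bound $A_1$ plays no role in your argument (nor does it need to).
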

As remarked after Proposition 2 in \cite{Wu84}, the condition (ii) is easily satisfied. So in the following proofs we will mainly focus on the verification of Condition (i).

\section{Proof of Theorem \ref{thm1}}\label{sec3}

\begin{proof}[Proof of Theorem \ref{thm1}]

Let $\rho:M\rightarrow [0,\infty)$ be the distance to the boundary $\pt M$. We shall prove that $-\log \rho$ is a strictly convex function on $M\setminus \pt M$. That is, given a compact subset $B$ of $M\setminus \pt M$, we will prove that there exists a positive constant $\varepsilon$ such that
\begin{equation}
C(-\log \rho)\geq \varepsilon\quad \text{ on } B.
\end{equation}
For that purpose, let $x\in B$ and let $y\in \pt M$ such that $\rho(x)=d(x,y)=d(x,\pt M)$. Assume $\gamma:[0,b]\rightarrow M$ is a minimizing unit-speed geodesic from $x$ to $y$. So $\gamma(0)=x$, $\gamma(b)=y$ and $\rho(x)=b$. For a unit vector $X\in T_xM$, we shall prove
\begin{equation}
C(-\log \rho)(x;X)\geq \varepsilon,
\end{equation}
where $\varepsilon$ is independent of $x\in B$ and the unit vector $X\in T_xM$.

Let $X(t)(0\leq t\leq b)$ be the parallel translate of $X$ along $\gamma$ with $X(0)=X$. We have the decomposition:
\begin{equation}
X(t)=\alpha X^\perp (t)+\beta \gamma'(t),
\end{equation}
where $\alpha$ and $\beta$ are constants satisfying $\alpha^2+\beta^2=1$, and $X^\perp(t)$ is a parallel unit vector field along $\gamma$ orthogonal to $\gamma'(t)$.

Define a vector field
\begin{equation}
W(t)=\alpha \varphi(t) X^\perp(t)+(1-\frac{t}{b})\beta \gamma'(t),
\end{equation}
where $\varphi(t)=\cosh(\kappa t)$. So $\varphi''=\kappa^2\varphi$.

Note that $W(0)=X(0)=X$ and $W(b)\in T_y(\pt M)$. So we can first take a unit-speed geodesic $\tau:(-s_0,s_0)\rightarrow M$ with $\tau(0)=x$ and $\tau'(0)=X$, and then take $\{\gamma_s(t):s\in (-s_0,s_0), t\in [0,b]\}$ as a one-parameter family of curves $\gamma_s:[0,b]\rightarrow M$ such that $\gamma_0(t)=\gamma(t)$, $\gamma_s(0)=\tau(s)$, $\gamma_s(b)\in \pt M$ and $\frac{\pt}{\pt s}\big|_{s=0}\gamma_s(t)=W(t)$. Furthermore, we require that $\gamma_s(t)$ depends on $s$ in a $C^\infty $ manner.

Let $f(s)$ be the length of $\gamma_s$. Therefore $f(s)\geq \rho(\tau(s))$ and
\begin{equation}
C(-\log \rho)(x;X)\geq C(-\log f)(0).
\end{equation}
So it suffices to prove $C(-\log f)(0)\geq \varepsilon$.

Note that $f\in C^\infty((-s_0,s_0))$. So we have
\begin{equation}
C(-\log f)(0)=(-\log f)''(0)=\frac{f'(0)^2}{f(0)^2}-\frac{f''(0)}{f(0)}.
\end{equation}
It is easy to see that $f(0)=b$ and by the first variation formula for arc length,
\begin{equation}
f'(0)=\langle W(t),\gamma'(t)\rangle \big|_{t=0}^b=-\beta.
\end{equation}
Moreover, by the second variation formula for arc length,
\begin{align*}
f''(0)&=\langle D_{W}W,\gamma'\rangle\big|_{t=0}^b+\int_{0}^b (|D_{\gamma'}(W^\perp(t))|^2-R(W^\perp,\gamma',W^\perp,\gamma'))dt\\
&=-\alpha^2 \varphi(b)^2S(X^\perp(b),X^\perp(b))+\int_0^b \alpha^2( (\varphi')^2-\varphi^2 K(X^\perp\wedge \gamma'))dt.
\end{align*}
So using the assumption in Theorem \ref{thm1}, we have
\begin{align*}
f''(0)&\leq -\alpha^2 \varphi(b)^2\Lambda +\alpha^2 \int_0^b ((\varphi')^2+\kappa^2\varphi^2)dt\\
& =\alpha^2(-\varphi(b)^2\Lambda +\varphi(b)\varphi'(b))=\alpha^2\varphi(b)^2(\kappa\tanh(\kappa b)-\Lambda)\\
&\leq \alpha^2\varphi(b)^2(\kappa \tanh(\kappa \rho_0)-\Lambda)\leq  \alpha^2(\kappa \tanh(\kappa \rho_0)-\Lambda),
\end{align*}
where $\rho_0$ denotes the maximum distance of a point from $\pt M$.

As a consequence, we obtain
\begin{align*}
C(-\log f)(0)&\geq \frac{\beta^2}{b^2}+\frac{\alpha^2(\Lambda-\kappa \tanh(\kappa \rho_0))}{b}\\
&\geq \frac{\beta^2}{\rho_0^2}+\frac{\alpha^2(\Lambda-\kappa\tanh(\kappa \rho_0))}{\rho_0}\\
&\geq \min\{\frac{1}{\rho_0^2},\frac{\Lambda-\kappa\tanh(\kappa \rho_0)}{\rho_0}\}=:\varepsilon(M).
\end{align*}

In summary, we have proven that $-\log\rho$ is strictly convex on $M\setminus \pt M$. The remaining of the proof is exactly the same as the proof of Theorem 2 in \cite{Wu87}, which we omit here.

\end{proof}

\section{Proof of Theorem \ref{thm2}}\label{sec4}

\begin{proof}[Proof of Theorem \ref{thm2}]

We follow the proof of Theorem 1 in \cite{Wu87} and so only give the sketch here. We first show that there exists a $C^\infty$ function $\chi:(-\infty,0)\rightarrow \R$ such that $\chi(-\rho)$ is in $\mathcal{C}(q)$ on $M\setminus \pt M$. Then smooth $\chi(-\rho)$ to finish the proof.

To that end, first recall some notations. Let $\rho_0=\max_M\rho$ as before and let $\rho_0>\rho_1>\rho_2>\dots$ be a sequence of positive numbers such that $\rho_i\rightarrow 0$ as $i\rightarrow \infty$. Let $\mathcal{E}_i=\rho^{-1}([\rho_{i+2},\rho_i])$. Then it is easy to see that for each $x\in M$, there exists an open geodesic ball $\mathcal{B}$ around $x$ and an integer $j$ such that $\mathcal{B}\subset \mathcal{E}_j$.

For any $x\in M$, we fix a minimizing unit-speed geodesic $\gamma:[0,b]\rightarrow M$ from $x$ to the boundary $\pt M$. So $\gamma(0)=x$, $\gamma(b)=y\in \pt M$ and $d(x,\pt M)=d(x,y)=b$. Let $X\in T_xM$. We apply the parallel translate along $\gamma(t)$ to $X$ to get $X(t)$ and decompose it as:
\begin{equation}
X(t)=\alpha X^\perp (t)+\beta \gamma'(t),
\end{equation}
where $\alpha$ and $\beta$ are constants (depending on $X$) satisfying $\alpha^2+\beta^2=|X|^2$, and $X^\perp(t)$ is a parallel unit vector field along $\gamma$ orthogonal to $\gamma'(t)$.

Then we define the vector field
\begin{equation}
W(t)=\alpha \varphi(t) X^\perp(t)+(1-\frac{t}{b})\beta \gamma'(t),
\end{equation}
where $\varphi(t)=\cosh(\kappa t)$ as before. Next we can define the $n$-parameter family of curves $\gamma_{X}:[0,b]\rightarrow M$ ($X\in \exp_x^{-1}\mathcal{B}$) such that (1) $\gamma_0=\gamma$; (2) $\gamma_X(0)=\exp_x(X)$ and $\gamma_X(b)\in \pt M$; and (3) $W(t)$ is induced by the one-parameter family of curves $t\mapsto \gamma_{sX}(t)$ ($-s_0\leq s\leq s_0$). Let $f_x(z)$ be the length of the curve $\gamma_{X}$ where $z=\exp_x(X)\in \mathcal{B}$. Then we get $f_x(x)=b$, $\langle Df_x,X\rangle =-\beta$, and
\begin{align*}
D^2f_x(X,X)&=-\alpha^2\: \varphi(b)^2S(X^\perp(b),X^\perp(b))+\alpha^2\int_0^b ( (\varphi')^2-\varphi^2 K(X^\perp\wedge \gamma'))dt.
\end{align*}
Take an orthonormal set $\{X_k\}_{k=1}^q$ in $T_xM$. There are three cases to consider. Keep in mind $x\in \mathcal{B}\subset \mathcal{E}_j$.

\textbf{Case 1}: $\{X_k\}_{k=1}^q\subset \gamma'(0)^\perp$. So $\alpha_{X_k}=1$ and we can easily get from the assumption in Theorem \ref{thm2}
\begin{equation}
\sum_{k=1}^q D^2f_x(X_k,X_k)\leq -\beta_j<0
\end{equation}
for some positive constant $\beta_j$. Since
\begin{equation}
D^2(\chi(-f_x))=\chi''(-f_x)df_x\otimes df_x-\chi'(-f_x)D^2f_x,
\end{equation}
we obtain
\begin{align*}
\sum_{k=1}^q & D^2(\chi(-f_x))(X_k,X_k)\geq \beta_j \min\chi'|_{[-\rho_j,-\rho_{j+2}]}.
\end{align*}
Here and below we assume that $\chi'>0$ and $\chi''>0$ on $[-\rho_0,0)$.

\textbf{Case 2}: We choose $\theta_j>0$ small such that if $|\langle X_k,\gamma'(0)\rangle| \leq \theta_j$ for all $1\leq k\leq q$, then
\begin{align*}
\sum_{k=1}^q & D^2f_x(X_k,X_k)\leq  -\frac{\beta_j}{2}.
\end{align*}
The existence of such $\theta_j$ is proved in Lemma 7 of \cite{Wu87}. Roughly speaking, it follows from the fact that Case 2 can be viewed as a perturbation of Case 1. As a consequence, we have
\begin{align*}
\sum_{k=1}^q & D^2(\chi(-f_x))(X_k,X_k)\geq \frac{\beta_j}{2} \min\chi'|_{[-\rho_j,-\rho_{j+2}]}.
\end{align*}

\textbf{Case 3}: There exists at least one of $\{X_k\}_{k=1}^q$, say $X_1$, satisfying $|\langle X_1,\gamma'(0)\rangle |>\theta_j$, we have
\begin{align*}
D^2(\chi(-f_x))(X_1,X_1)&\geq \chi''(-\rho(x))\theta_j^2-\chi'(-\rho(x))P_j,\\
\sum_{k=2}^q D^2(\chi(-f_x))(X_k,X_k)&\geq -\chi'(-\rho(x))(q-1)P_j,
\end{align*}
where $P_j$ is a positive constant such that $D^2f_x(X,X)\leq P_j$ for any unit $X\in T_xM$ (see Lemma 6 in \cite{Wu87}). Therefore, if we assume further that
\begin{equation}
\chi''(t)\geq \frac{\chi'(t)}{\theta_j^2}(\beta_j+qP_j)
\end{equation}
for any $t\in [-\rho_j,-\rho_{j+2}]$, then
\begin{align*}
\sum_{k=1}^q & D^2(\chi(-f_x))(X_k,X_k)\geq \beta_j \min\chi'|_{[-\rho_j,-\rho_{j+2}]}.
\end{align*}

As a summary of these three cases, there exists a sequence of positive constant $\{\Gamma_0,\Gamma_1,\dots\}$ such that for each $x\in \mathcal{E}_j$ and any orthonormal set $\{X_1,\dots,X_q\}$ in $T_xM$, we have
\begin{align*}
\sum_{k=1}^q & D^2(\chi(-f_x))(X_k,X_k)\geq \Gamma_j.
\end{align*}
Then it follows from Lemma \ref{lem1} that $\chi(-\rho)\in \mathcal{C}(q)$, and we can use the Smoothing Theorem \ref{thm4} to finish the proof. The details, including the construction of the required $\chi$, are the same as in the proof of Theorem 1 in \cite{Wu87} and are omitted here.

\end{proof}

\section{Proof of Theorem \ref{thm3}}\label{sec5}

In this section we first outline the proof of Theorem \ref{thm3} and then prove the crucial Proposition \ref{prop1} used in the proof.

\begin{proof}[Proof of Theorem \ref{thm3}]
Let $M$ be as in Theorem \ref{thm3}. By Gromov's result \cite{Gro69}, $M$ supports a Riemannian metric with $\varepsilon$-pinched sectional curvature. Fix this metric as the ambient metric. We claim that restricted on some suitable neighborhood $N$ of the spine of the handlebody, the metric is such that $\Lambda_q(\pt N)>c$ for any fixed $c>0$. Therefore, since $N$ is diffeomorphic to $M$, we get a Riemannian metric on $M$ via this diffeomorphism which has $\varepsilon$-pinched sectional curvature and $\Lambda_q>c$.

We prove the claim as follows. Start with a $0$-handle $D^0$ in $M$, which is diffeomorphic to a disk. Without loss of generality, we may assume $D^0$ is a small geodesic ball such that on $\pt D^0$ we have $\Lambda_q>c$. Then we attach all other $k$-handles ($k\leq q-1$) to $D^0$ as in Proposition~\ref{prop1} below. Proposition~\ref{prop1} guarantees that the handlebody we get after every handle-attaching still satisfies $\Lambda_q>c$. After all the handle-attachings we get the desired neighborhood $N$ of the spine of the handlebody. So the proof of Theorem~\ref{thm3} would be complete.

\end{proof}

It remains to prove the following.

\begin{prop}\label{prop1}
Let $X$ be an oriented hypersurface in an $n$-dimensional Riemannian manifold $\Omega$ with $\Lambda_q(X)>c$ for some integer $1\leq q\leq n-1$ and constant $c>0$. Suppose $X'$ is a hypersurface obtained from $X$ by attaching a $k$-handle $D^k$ to the positive side (see the definition below) of $X$ with $k\leq q-1$. Then $X'$ can be constructed such that $\Lambda_q(X')>c$.
\end{prop}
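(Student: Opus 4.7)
The plan is to realize $X'$ as the smoothed boundary of a geodesic tube of small radius $r$ about a suitably chosen core $k$-disk $D\subset \overline{\Omega^+}$ whose boundary is the attaching sphere, and then verify $\Lambda_q>c$ on each of three regions: the cylindrical part of the new boundary, the smoothed corner, and the untouched portion of $X$. The last region is immediate from the hypothesis, so the content lies in the first two, and the whole argument is driven by one quantitative observation about geodesic tubes.

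\emph{Key estimate.} For a smooth compact $k$-submanifold $\Sigma^k\subset \Omega$, the boundary $\partial N_r(\Sigma)$ of the geodesic tube of radius $r$ about $\Sigma$ splits, via parallel transport along the outgoing radial geodesics, into $k$ tangential directions coming from $T\Sigma$ and $n-k-1$ spherical directions in $\nu\Sigma$ perpendicular to the radial. In this frame the Weingarten operator (with the inward normal) has block form
\begin{equation}
\mathrm{II}\;=\;\begin{pmatrix} A_r & 0\\ 0 & \tfrac{1}{r}I+B_r\end{pmatrix},
\end{equation}
where $\|A_r\|,\|B_r\|$ are bounded by a constant depending only on $\Sigma$ and the ambient curvature of $\Omega$, independent of $r$. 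Thus $k$ principal curvatures are of size $O(1)$ and $n-k-1$ of them are $\tfrac{1}{r}+O(1)$. Since $k\leq q-1$, any choice of $q$ principal curvatures must include at least $q-k\geq 1$ of the spherical ones, so
\begin{equation}
\Lambda_q(\partial N_r(\Sigma))\;\geq\;\frac{q-k}{r}-C\;>\;c
\end{equation}
for $r$ sufficiently small.

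\emph{Construction.} Choose a smoothly embedded $k$-disk $D\subset \overline{\Omega^+}$ with $\partial D=\phi(S^{k-1})\subset X$ meeting $X$ orthogonally along $\partial D$; this is possible by a standard transversality argument applied to the attaching map. For small $r>0$, take the handle $H_r=N_r(D)\cap \overline{\Omega^+}$ and form the hypersurface-with-corner obtained by removing $X\cap \mathrm{int}\, H_r$ from $X$ and adjoining the component of $\partial N_r(D)\cap \overline{\Omega^+}$ over $\mathrm{int}\, D$. The key estimate then gives $\Lambda_q>c$ on the cylindrical piece, and the hypothesis gives $\Lambda_q>c$ on the untouched part of $X$.

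\emph{Main obstacle: the corner.} The non-trivial step is to round off the corner near $\partial D$ without losing $\Lambda_q>c$. Following \cite{LM84,Sha87}, I would, in Fermi coordinates $(u,v)\in D\times \nu D$ near $\partial D$, replace a thin collar by the surface of revolution of a smooth profile curve $\gamma(s)=(\rho(s),z(s))$ in the upper half-plane interpolating monotonically between $\rho\equiv r$ (the tube) and the image of $X$, concave toward the $+z$-axis and with uniformly bounded geodesic curvature. The resulting hypersurface retains the block structure above: the $n-k-1$ spherical principal curvatures remain of size $\tfrac{1}{\rho(s)}+O(1)\geq \tfrac{1}{r}+O(1)$, while the remaining $k$ principal curvatures (coming from the $D^k$-directions and the profile direction) are uniformly bounded by a constant independent of $r$. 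The same counting then gives $\Lambda_q>c$ throughout the smoothed corner for $r$ small. Ambient curvature corrections from $\Omega$ enter only at $O(1)$ and are absorbed into the same constants, which is what makes the Euclidean-model construction of \cite{LM84,Sha87} survive in the present setting.
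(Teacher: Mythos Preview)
Your overall strategy---realize $X'$ as the tube boundary over the core disk glued to the rest of $X$ along a smoothed corner, and estimate $\Lambda_q$ on each piece---is exactly the Lawson--Michelsohn--Sha construction the paper follows. The tube estimate is right, and so is the observation that $k\leq q-1$ forces at least one spherical curvature into every $q$-sum.

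The gap is in the corner. Two specific problems. First, the inequality $1/\rho(s)\geq 1/r$ is backwards: the profile runs from the tube ($\rho=r$) outward to $X$, so $\rho(s)\geq r$ along the corner and the spherical curvatures there are at most $1/r$. At the $X$ end of the corner they are only of size $1/\varepsilon$, where $\varepsilon$ is the width of the smoothing region. Second, the profile direction carries a \emph{negative} principal curvature whose size is tied to $\varepsilon$, not to the ambient geometry: the profile turns through $\pi/2$ over arclength $\sim\varepsilon$, so its curvature is $\sim -1/\varepsilon$ somewhere, no matter how you choose it. In the extremal case $k=q-1$ your $q$-sum contains exactly one spherical curvature ($\sim 1/\rho$) against one profile curvature ($\sim -1/\varepsilon$), and near the $X$ end where $\rho\approx\varepsilon$ these cancel; the counting then yields only $\Lambda_q=O(1)$, not $\Lambda_q>c$.

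What actually makes the corner work in the paper (following \cite{Sha87}) is writing it as the level set $s=f(r)$ of the two distance functions $s=\mathrm{dist}(\cdot,X)$, $r=\mathrm{dist}(\cdot,D^k)$ and using
\[
\sum_{i=1}^q B_F(e_i,e_i)\;\geq\;\frac{1}{|\nabla F|}\Bigl(\sigma_s(q{+}1)-f'(r)\,\sigma_r(q{+}1)-f''(r)\sum_i(\nabla_{e_i}r)^2\Bigr)+\text{(lower order)}.
\]
The crucial positive contribution is $\sigma_s(q{+}1)\geq c+2\delta$, coming from the hypothesis $\Lambda_q(X)>c$; the tube adds $-f'\sigma_r(q{+}1)>0$; and the dangerous profile term $-f''$ is neutralized by constructing $f$ so that $\delta-c_0 f'/r-f''>0$ (resp.\ with $f''/f'^2$ near the tube end). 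Your argument uses only the tube contribution and overlooks the essential role of $\sigma_s(q{+}1)$; without it, and without the ODE construction of $f$, the corner estimate does not close.
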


The rest of this section is devoted to the proof of Proposition \ref{prop1}. To that end we need some preliminaries.

First note that from the assumption, $X$ is mean convex. We choose a unit normal $\nu$ on $X$ such that the mean curvature given by $H=div_X\nu$ is positive and we call the side $X^+\subset \Omega$ into which $\nu$ points the positive side. Denote the negative side by $X^-\subset \Omega$.

The core of the $k$-handle $D^k$ is a $k$-dimensional disk, which is also denoted by $D^k$ for simplicity. Assume the $k$-disk $D^k$ is attached orthogonally to $X$ from the positive side. Set
\begin{equation}
s(x)=dist(x,X),\quad r(x)=dist(x,D^k), \quad x\in \Omega.
\end{equation}
Then $s$ is smooth in $\Omega_1'=\Omega_1\setminus X^-$ where $\Omega_1$ is some neighborhood of $X$ in $\Omega$, and $|\nabla s|=1$. Similarly, $r$ is smooth in $\Omega_2'=\Omega_2\setminus (X^-\cup D^k)$ where $\Omega_2$ is a suitable neighborhood of $D^k$ in $\Omega$, and $|\nabla r|=1$.

It is easy to see that the map
\begin{equation}
(r,s):\Omega_1'\cap \Omega_2'\rightarrow \R^2
\end{equation}
is a smooth submersion. The idea here is the same as in \cite{LM84,Sha87}, i.e. to construct a regular curve $\gamma$ in $\R^2$, such that the hypersurface $S_\gamma$ defined as the inverse image $(r,s)^{-1}(\gamma)$ joins the hypersurface $r^{-1}(\varepsilon_0)$ to $X$ smoothly for some $\varepsilon_0>0$, and the new hypersurface $X'$ obtained in this way will still satisfy $\Lambda_q(X')>c$.

The joint hypersurface $S_\gamma$ will be chosen as the level set of the smooth function $F(x)=s(x)-f(r(x))=0$ for some suitable smooth decreasing function $f$. Given $f$, the unit normal of $S_\gamma$ is
\begin{equation}
e_n=\frac{\nabla F}{|\nabla F|},\text{ with }\nabla F=\nabla s-f'(r)\nabla r.
\end{equation}
Then the second fundamental form of $S_\gamma$ is given by
\begin{equation}
B_F(v,w)=\langle \nabla_v e_n,w\rangle
\end{equation}
for any $v,w\in TS_\gamma$.

Let $e_1,\dots,e_q$ be any orthonormal vectors in $TS_\gamma$. Then as estimated in \cite[p.~360]{Sha87}, we obtain
\begin{align*}
\sum_{i=1}^q B_F(e_i,e_i)&\geq \frac{1}{|\nabla F|}\left(\sigma_s(q+1)-f'(r)\sigma_r(q+1)-f''(r)\sum_{i=1}^q(\nabla_{e_i}r)^2\right)\\
&+\frac{1}{|\nabla F|^3}\left(f'(r)\nabla^2 r(\nabla s,\nabla s)-f'(r)^2 \nabla^2s(\nabla r,\nabla r)\right),
\end{align*}
where $\sigma_u(m)$ denotes the sum of the least $m$'s eigenvalues of the Hessian $\nabla^2 u$. In other words, denote the eigenvalues of $\nabla^2 u$ by
\begin{equation}
\lambda_1\leq \lambda_2\leq \dots\leq \lambda_n.
\end{equation}
Then $\sigma_u(m)=\lambda_1+\dots+\lambda_m$. Note that for a smooth function $u$ satisfying $|\nabla u|=1$, the level hypersurface of $u$ satisfies $\Lambda_q>c$ if and only if $\sigma_u(q+1)>c$. See \cite{Sha87} for more details.

Now we recall the following lemma, which is Lemma 2 in \cite{Sha87} with minor modification.
\begin{lem}
Let the setting be as in Proposition \ref{prop1}. (i) We can choose $\Omega_1$ such that there exists a constant $\delta>0$ such that $\sigma_s(q+1)\geq c+2\delta$ in $\Omega_1$. (ii) We can choose $\Omega_2$ such that $\sigma_r(q+1)>c_1/r$ in $\Omega_2'$, where $c_1>0$ is a constant.
\end{lem}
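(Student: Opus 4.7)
The plan is to read off the spectrum of each distance-function Hessian $\nabla^2 s$ and $\nabla^2 r$ directly. Since $s$ and $r$ satisfy $|\nabla s|=|\nabla r|=1$ on their smooth loci, each Hessian has an automatic zero eigenvalue in the gradient direction, and its remaining $n-1$ eigenvalues are the principal curvatures of the corresponding level hypersurface.

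For part (i), at a point $x\in X$ the eigenvalues of $\nabla^2 s$ are $\{0,\lambda_1(x),\dots,\lambda_{n-1}(x)\}$, where $\lambda_1\leq\dots\leq\lambda_{n-1}$ are the principal curvatures of $X$. The hypothesis $\Lambda_q(X)>c$ reads $\lambda_1+\dots+\lambda_q>c>0$, which forces $\lambda_q>0$: otherwise $\lambda_1,\dots,\lambda_q$ would all be nonpositive and their sum would be $\leq 0$. Letting $p\leq q-1$ denote the number of strictly negative $\lambda_i$, the zero slot lies between $\lambda_p$ and $\lambda_{p+1}$, so the $q+1$ smallest of the $n$ eigenvalues of $\nabla^2 s$ are exactly $\lambda_1,\dots,\lambda_p,0,\lambda_{p+1},\dots,\lambda_q$, and their sum equals $\Lambda_q(x)>c$. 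Compactness of $X$ and continuity of $\Lambda_q$ on $X$ then yield $\sigma_s(q+1)|_X\geq c+3\delta$ for some $\delta>0$, and since $\nabla^2 s$ is continuous up to $X$ on the positive side, shrinking $\Omega_1$ propagates this bound to $\sigma_s(q+1)\geq c+2\delta$ on all of $\Omega_1$.

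For part (ii), the idea is Hessian comparison for the distance to a submanifold in a small tubular neighborhood of $D^k$. In flat $\R^n$ the distance to a $k$-plane has Hessian with spectrum $\{0^{k+1},(1/r)^{n-1-k}\}$ at distance $r$, the $k+1$ zeros coming from $\nabla r$ together with the $k$ directions parallel to $D^k$, and the $n-1-k$ copies of $1/r$ coming from the spherical fibre of the tube. By the standard Riccati comparison for the Hessian of a distance function to a submanifold in a manifold of bounded sectional curvature --- equivalently by an explicit Jacobi-field computation in Fermi coordinates along $D^k$ --- in a sufficiently small tubular neighborhood the spectrum of $\nabla^2 r$ differs from this Euclidean model by corrections of size $O(r)$. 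Since $k\leq q-1$, the $q+1$ smallest eigenvalues then consist of $k+1$ near-zeros plus $q-k\geq 1$ eigenvalues of size $1/r+O(r)$, giving
\begin{equation*}
\sigma_r(q+1)\geq \frac{q-k}{r}-O(1)\geq \frac{c_1}{r}
\end{equation*}
on a sufficiently small $\Omega_2$, for some $c_1>0$.

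The main obstacle I expect is the combinatorial bookkeeping in part (i), namely the verification that $\sigma_s(q+1)|_X$ equals $\Lambda_q(x)$ rather than a smaller quantity --- this is precisely where the strict positivity $c>0$ (forcing $\lambda_q>0$) enters --- together with the one-sided continuity of $\nabla^2 s$ up to $X$. Part (ii) is essentially standard comparison geometry of tubes, and the only subtlety is ordering the shrinking of $\Omega_2$ against the $O(r)$ correction so that the $c_1/r$ leading term survives.
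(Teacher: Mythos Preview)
Your argument is essentially the same as the paper's (which in turn follows Sha's Lemma~2): part~(i) is deduced from $\Lambda_q(X)>c$ together with compactness, and part~(ii) from a Fermi-coordinate/Jacobi-field computation of $\nabla^2 r$ near the core disk, using $k\le q-1$ to guarantee that at least one eigenvalue of size $\sim 1/r$ appears among the $q+1$ smallest. Your identification in~(i) of where the hypothesis $c>0$ enters --- forcing $\lambda_q>0$ so that the zero eigenvalue sits among the $q+1$ smallest and hence $\sigma_s(q+1)|_X=\Lambda_q$ --- is exactly the point.

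One small imprecision: you assert that the spectrum of $\nabla^2 r$ differs from the Euclidean model by $O(r)$, but the $k$ eigenvalues in directions tangent to $D^k$ limit (as $r\to 0$) to the eigenvalues of the shape operator of $D^k$ in the direction $\nabla r$, which are $O(1)$ rather than $O(r)$ unless $D^k$ is totally geodesic. This does not affect your conclusion, since your final displayed estimate already carries an $O(1)$ error term and only uses that the tangential eigenvalues are bounded while at least $q-k\ge 1$ normal eigenvalues are $1/r+O(r)$.
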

We mention that (i) follows from $\Lambda_q(X)>c$ and (ii) is proved by use of a calculation in Fermi coordinates and the assumption $k\leq q-1$.

Now we are ready to finish the proof of Proposition \ref{prop1}. As in \cite{Sha87}, we work in a small neighborhood $U$ around the joint
\begin{equation}
U=\{x\in D_{2\varepsilon_1}\cap X_{2\varepsilon_2}\cap X^+:r(x)>0\},
\end{equation}
where $D_\varepsilon=\{x\in \Omega:r(x)<\varepsilon\}$ and $X_\varepsilon=\{x\in \Omega:s(x)<\varepsilon\}$. Here $\varepsilon_1$ and $\varepsilon_2$ are small positive constants. And by the construction in \cite{Sha87}, for given $\varepsilon_1$, $\varepsilon_2$ and $c_0=c_0(\varepsilon_1,\varepsilon_2)$, it suffices to prove
\begin{equation}
\sum_{i=1}^q B_F(e_i,e_i)>c
\end{equation}
on the part of $S_\gamma$ where $\varepsilon_0\leq r\leq \varepsilon_1$ for some chosen $\varepsilon_0=\varepsilon_0(\varepsilon_1,c_0)>0$.

With the help of the lemma above, as arguing in the last steps in \cite{Sha87}, we conclude that there exist $\varepsilon_1$, $\varepsilon_2$ and $c_0$ such that
\begin{equation}
\sum_{i=1}^q B_F(e_i,e_i)\geq \frac{1}{|\nabla F|}\left(c+\delta-\frac{c_0f'(r)}{r}\right)+\frac{1}{|\nabla F|}\left(\delta-\frac{c_0f'(r)}{r}-f''(r)\right),
\end{equation}
when $a\varepsilon_0\leq r\leq \varepsilon_1$, and
\begin{equation}
\sum_{i=1}^q B_F(e_i,e_i)\geq \frac{1}{|\nabla F|}\left(c+\delta-\frac{c_0f'(r)}{r}\right)+\frac{1}{|\nabla F|}\left(\delta-\frac{c_0f'(r)}{r}-\frac{f''(r)}{f'(r)^2}\right),
\end{equation}
when $\varepsilon_0\leq r\leq a\varepsilon_0$. Here $a>1$ is a constant depending on $\varepsilon_1$ and $c_0$.

Note that $|\nabla F|=\sqrt{1+f'(r)^2-2f'(r)\langle \nabla r,\nabla s\rangle}$. So when $|f'(r)|$ is small enough (say $|f'(r)|\leq \tau$ for some $\tau>0$), we have
\begin{equation}\label{eq10}
\frac{1}{|\nabla F|}\left(c+\delta-\frac{c_0f'(r)}{r}\right)>c.
\end{equation}
On the other hand, when $|f'(r)|\geq \tau$, the dominant term in \eqref{eq10} is $-\frac{1}{|\nabla F|}\frac{c_0f'(r)}{r}$. So we still have
\begin{equation}
\frac{1}{|\nabla F|}\left(c+\delta-\frac{c_0f'(r)}{r}\right)>c,
\end{equation}
provided $\varepsilon_1$ and $\varepsilon_2$ are small so that $1/r$ is large enough.

Meanwhile, as in \cite{Sha87}, the function $f$ can be constructed such that
\begin{equation}
\delta-\frac{c_0f'(r)}{r}-f''(r)>0,\quad a\varepsilon_0\leq r\leq \varepsilon_1,
\end{equation}
and
\begin{equation}
\delta-\frac{c_0f'(r)}{r}-\frac{f''(r)}{f'(r)^2}>0,\quad \varepsilon_0\leq r\leq a\varepsilon_0.
\end{equation}
So Proposition \ref{prop1} follows immediately.


\bibliographystyle{Plain}

\end{document}